\documentclass{amsart}
\usepackage{all2018,pxfonts,color}
\newcommand{\Log}{\operatorname{Log}}
\newcommand{\cA}{\mathcal{A}}
\newcommand{\Zns}{Z_{\mathrm{ns}}}
\newcommand{\rT}{\mathrm{T}}
\newcommand{\rS}{\mathrm{S}}
\newcommand{\rR}{\mathrm{R}}

\newcommand{\bz}{\mathbf{z}}
\newcommand{\abs}{\operatorname{abs}}

\begin{document}

\title{The dimension of an amoeba}

\author{Jan Draisma}
\address{Universit\"at Bern, Mathematisches Institut, Sidlerstrasse 5,
3012 Bern, and Eindhoven University of Technology}
\email{jan.draisma@math.unibe.ch}

\author{Johannes Rau}
\address{Universit\"at T\"ubingen, Fachbereich Mathematik, 
Auf der Morgenstelle 10, 72076 T\"ubingen, Germany}
\email{johannes.rau@math.uni-tuebingen.de}

\author{Chi Ho Yuen}
\address{Universit\"at Bern, Mathematisches Institut, Alpeneggstrasse 22,
3012 Bern}
\email{chi.yuen@math.unibe.ch}

\maketitle

\begin{abstract}
Answering a question by Nisse and Sottile, we derive a formula for the
dimension of the amoeba of an irreducible algebraic variety.
\end{abstract}

\section{Introduction and main result}

Let $X \subseteq (\CC^*)^n$ be an irreducible, closed algebraic
subvariety. We define 
\[ \Log:(\CC^*)^n \to \RR^n, \quad 
(z_1,\ldots,z_n) \mapsto (\log|z_1|,\ldots,\log|z_n|) \] 
and 
$\cA(X):=\Log(X)$, the {\em amoeba} of $X$. The amoeba is the image of
the semi-algebraic set ({\em algebraic amoeba})
\[ |X|:=\{(|z_1|,\ldots,|z_n|) \mid (z_1,\ldots,z_n) \in X\} \subseteq
\RR_{>0}^n, \]
under a diffeomorphism and thus has an obvious notion of dimension,
denoted $\dim_\RR \cA(X)$.
Clearly, $\dim_\RR \cA(X) \leq 2 \dim_\CC X$. In \cite{Nisse18}, Nisse
and Sottile raise the question when this inequality is strict, as happens
in the following two examples.

\begin{ex}[hypersurfaces] \label{ex:Hypersurface}
Suppose that $n>2$ and that $X$ is a hypersurface. Then
$\dim_\RR \cA(X) \leq n<2(n-1)=2 \dim_\CC X$. \hfill $\clubsuit$
\end{ex}

\begin{ex}[torus-invariant varieties] \label{ex:TorusAction}
Suppose that $X$ is stable under a subtorus $\rS
\subseteq (\CC^*)^n$ of dimension $k>0$. Denote by $Y$ the image of $X$ in the algebraic
torus $(\CC^*)^n/\rS \cong (\CC^*)^{n-k}$. The map $X \to Y$
has fibers of complex dimension $k$, and the corresponding map $\cA(X)
\to \cA(Y)$ has fibers of real dimension $k$---namely, translates
of $\cA(\rS)$, which is a linear subspace of $\RR^n$ spanned by its
intersection wih $\QQ^n$. Thus we have
\[ \dim_\RR \cA(X)=k+\dim_\RR \cA(Y) \leq k+2 \dim_\CC Y =-k+2 \dim_\CC
X < 2\dim_\CC X. \quad \quad \quad \quad \clubsuit \]
\end{ex}

Our theorem says that these are two instances of the same phenomenon,
and that this phenomenon is responsible for all drops in dimension.

\begin{thm} \label{thm:Main}
Let $X \subseteq (\CC^*)^n$ be an irreducible, closed algebraic
subvariety. Then 
\begin{align*} \dim_\RR \cA(X) = \min \{\ &2 \dim_\CC X + 2 \dim_\CC \rT -
\dim_\CC \rS \mid \\
& \rT \subseteq \rS \subseteq (\CC^*)^n \text{ subtori and }
\rS\cdot(\overline{\rT \cdot X}) = \overline{\rT \cdot X}\  \}.
\end{align*}
An equivalent but more concise formula can then be given as
\begin{align*} \dim_\RR \cA(X) = \min \{2 \dim_\CC \overline{\rS \cdot X} - \dim_\CC \rS \mid
\rS \subseteq (\CC^*)^n \text{ subtorus}\}.\\
\end{align*}
\end{thm}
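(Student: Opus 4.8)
The plan is to first verify that the two displayed expressions agree, and then to prove the first one, splitting it into the easy inequality ``$\le$'' (which follows from Example~\ref{ex:TorusAction}) and the substantial inequality ``$\ge$''. For the equality of the two formulas: if $\rT\subseteq\rS$ is a valid pair, then $\overline{\rS\cdot X}=\overline{\rT\cdot X}$ --- the latter is $\rS$-stable and contains $X$, so it contains $\overline{\rS\cdot X}$, while $\rT\subseteq\rS$ gives the reverse inclusion --- and since $\dim_\CC\overline{\rT\cdot X}\le\dim_\CC X+\dim_\CC\rT$, the concise value at $\rS$ is at most the first value at $(\rT,\rS)$. Conversely, for any subtorus $\rS$ one chooses a general subtorus $\rT\subseteq\rS$ of rank $\dim_\CC\overline{\rS\cdot X}-\dim_\CC X$, so that (by a generic-position argument for the dominant map $\rS\times X\to\overline{\rS\cdot X}$) already $\overline{\rT\cdot X}=\overline{\rS\cdot X}$; then $(\rT,\rS)$ is valid and its first value equals the concise value at $\rS$. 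So both minima coincide and I may use whichever form is convenient.

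For ``$\le$'', fix a valid pair $(\rT,\rS)$ and put $Y:=\overline{\rT\cdot X}$, which is $\rS$-stable. From $X\subseteq Y$ we get $\cA(X)\subseteq\cA(Y)$, and Example~\ref{ex:TorusAction} applied to $Y$ gives $\dim_\RR\cA(Y)=\dim_\CC\rS+\dim_\RR\cA(Y/\rS)\le\dim_\CC\rS+2\dim_\CC(Y/\rS)=2\dim_\CC Y-\dim_\CC\rS$, using that $\rS$ acts freely on $(\CC^*)^n$, so that $\dim_\CC(Y/\rS)=\dim_\CC Y-\dim_\CC\rS$; together with $\dim_\CC Y\le\dim_\CC X+\dim_\CC\rT$ this gives the bound. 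For ``$\ge$'' I would recast the left-hand side via logarithmic tangent spaces. At a smooth point $x\in X$ let $V_x\subseteq\CC^n$ be the image of $T_xX$ under the canonical isomorphism $T_x(\CC^*)^n\cong\CC^n$, $\xi\mapsto(\xi_j/x_j)_j$; then $\dim_\CC V_x=\dim_\CC X$, and $d\Log_x$ restricted to $T_xX$ is, through this isomorphism, the real-linear map $\operatorname{Re}\colon V_x\to\RR^n$, with kernel $V_x\cap i\RR^n$. Since one checks $\dim_\RR(V_x\cap i\RR^n)=\dim_\CC(V_x\cap\overline{V_x})$, it follows that $\dim_\RR\cA(X)=2\dim_\CC X-\delta$ with $\delta:=\min_x\dim_\CC(V_x\cap\overline{V_x})$, the minimum being attained on a dense open subset of the smooth locus.

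Carrying out the same analysis for $\overline{\rS\cdot X}$ (whose generic logarithmic tangent space is $V_x+\operatorname{Lie}(\rS)$) and for its quotient by $\rS$, and feeding in Example~\ref{ex:TorusAction} together with ``$\le$'', one reduces ``$\ge$'' to the following: produce a \emph{single} subtorus $\rS$, with $\operatorname{Lie}(\rS)=R\otimes\CC$ for a rational subspace $R\subseteq\RR^n$, such that for generic $x\in X$ one has (i) the image of $V_x$ in $\CC^n/(R\otimes\CC)$ meets its complex conjugate only in $0$, and (ii) $R\subseteq\operatorname{Re}(V_x)$. Here (i) forces $\overline{\rS\cdot X}/\rS$ to have full-dimensional amoeba, (ii) forces $\cA(\overline{\rS\cdot X})$ to have the same dimension as $\cA(X)$, and together with the ``$\le$'' bound these two facts force $2\dim_\CC\overline{\rS\cdot X}-\dim_\CC\rS=\dim_\RR\cA(X)$, which is exactly what is needed. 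A short linear-algebra computation moreover shows that (i) by itself already forces $V_x\cap\overline{V_x}\subseteq R\otimes\CC$, i.e.\ $V_x\cap\RR^n\subseteq R$, for generic $x$: thus $R$ has to swallow all the degenerate directions of the amoeba while staying tangent to it, and --- the real point --- it has to be rational.

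The existence of such a rational $R$ is, I expect, the main obstacle: the conjugation-invariant subspaces $V_x\cap\overline{V_x}$ genuinely vary with $x$ and need not be rational, so one must show that they are nonetheless controlled by an honest subtorus. My plan is to first reduce to the case that $\operatorname{Stab}(X)$ is finite, by dividing out its identity component $\operatorname{Stab}(X)^{\circ}$ --- a connected algebraic subgroup of $(\CC^*)^n$, hence automatically a subtorus --- which by Example~\ref{ex:TorusAction} and the analogous bookkeeping on the right-hand side changes both sides of the theorem by the same amount. For the remaining case I would exploit the irreducibility of $X$ --- via an analytic-continuation/monodromy argument for the Gauss map $x\mapsto V_x$ along paths in the smooth locus, or via a limiting argument for this map inside a Grassmannian --- to extract from the family $\{V_x\cap\overline{V_x}\}_x$ a common rational subspace $R$ with properties (i) and (ii); the subtorus $\rS$ with $\operatorname{Lie}(\rS)=R\otimes\CC$, together with a general $\rT\subseteq\rS$ of the appropriate dimension for the first formula, is then the desired minimiser. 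I expect essentially all of the work, and the need for a genuinely new idea, to be concentrated in this last rationality step.
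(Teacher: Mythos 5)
Your setup is sound and broadly matches the paper's in spirit: the inequality $\le$ via Example~\ref{ex:TorusAction}, the equivalence of the two formulas (the paper's Lemma~\ref{lm:Torus}, proved there by induction using one-dimensional subtori rather than a ``general subtorus'' of prescribed rank --- a formulation that needs care, since subtori of fixed rank form a countable, not an irreducible, family), and the identification $\dim_\RR\cA(X)=2\dim_\CC X-\min_x\dim_\CC(V_x\cap\overline{V_x})$ are all correct. But there is a genuine gap exactly where you say you expect one: you never produce the rational subspace $R$, and the entire theorem hinges on it. Neither of your suggested mechanisms (monodromy of the Gauss map, or limits in the Grassmannian) obviously yields rationality --- irreducibility of $X$ gives at best that the family $\{V_x\cap\overline{V_x}\}_x$ is governed by one irreducible parameter space, not that any subspace extracted from it is spanned by vectors in $\QQ^n$.

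The missing idea is the paper's Lemma~\ref{lm:RationalSpace}: for a real-Zariski-closed subset $Z$ of the compact torus $(S^1)^n$, the \emph{sum} $\sum_{p}p^{-1}T_pZ$ over all nonsingular points is automatically spanned by its intersection with $i\QQ^n$. This is not because the individual kernels are rational (they need not be), but because, by a theorem of Borel, a suitable product of copies of the complexification $Z_\CC$ and its inverse is a closed subtorus $\rT$ of $(\CC^*)^n$, and the sum of translated tangent spaces equals the Lie algebra of $\rT\cap(S^1)^n$, which is rational precisely because $\rT$ is a subtorus. Applying this to the fibers $Z_q$ of $\abs|_X$ gives, for each generic $q$, a rational space $Q_q$ of dimension at least the defect $c$; a countability argument (only countably many rational subspaces $R$, the loci $V_R$ are real-Zariski-closed and cover $U$, and $\overline{|X|}$ is irreducible) then yields a \emph{single} rational $R$ tangent to $\overline{|X|}$ everywhere, which is Proposition~\ref{prop:RealAction}. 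Note also that even with $R$ in hand the paper does not conclude in one shot as your conditions (i)--(ii) would require: it extracts only a one-dimensional torus $\rR$, passes to $\overline{\rR\cdot X}/\rR\subseteq(\CC^*)^{n-1}$, and inducts on $n$, because after one quotient the amoeba may still be degenerate. So your reduction is a correct reformulation of what must be proved, but the rationality step you flag as requiring ``a genuinely new idea'' is precisely the substance of the paper's proof, and without it the argument is incomplete.
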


In this theorem, $\overline{\rT \cdot X}$ (resp. $\overline{\rS \cdot X}$) is the Zariski closure of
the set of all $tz$ with $t \in \rT$ (resp. all $rz$ with $r \in \rS$) and $z \in X$;
notice that whenever $\rS\cdot(\overline{\rT \cdot X}) = \overline{\rT \cdot X}$ as in the formula, 
the set is also equal to $\overline{\rS \cdot X}$.
Naturally, $\rS$ and $\rT$ may be taken zero-dimensional, in which case
we recover the upper bound $2 \dim_\CC X$.

\begin{ex}[hypersurfaces revisited] \label{ex:Hyper2}
If $X$ is a hypersurface, then most one-dimensional tori $\rT
\subseteq (\CC^*)^n$ will satisfy $\overline{\rT \cdot X}=(\CC^*)^n$
(see Lemma~\ref{lm:Torus}), so we may take $\rS=(\CC^*)^n$. The bound
in the theorem is $2(n-1)+2-n=n$. \hfill $\clubsuit$
\end{ex}

To motivate the structure of this paper, we now prove the easy inequality
$\leq$ in our main theorem.

\begin{proof}[Proof of $\leq$ in Theorem~\ref{thm:Main}.]
Let $\rT \subseteq \rS \subseteq (\CC^*)^n$ be subtori such that
$Y:=\overline{\rT \cdot X}$ is $\rS$-stable. Then
\[ \dim_\RR \cA(X) \leq \dim_\RR \cA(Y) 
\leq 2\dim_\CC Y - \dim_\CC \rS \leq 2(\dim_\CC X + \dim_\CC \rT) -
\dim_\CC \rS, \]
where the second equality follows from Example~\ref{ex:TorusAction}. 
\end{proof}

If we want equality to hold in the proof above, then we need that first,
$\dim_\CC Y=\dim_\CC X+ \dim_\CC \rT$; second, the bound in
Example~\ref{ex:TorusAction} for the pair $(Y,\rS)$ is tight; and
third, $\dim_\RR \cA(X) = \dim_\RR \cA(Y) = \dim_\RR (\cA(X) +
\cA(\rT))$. Our proof of Theorem~\ref{thm:Main} consists of first
finding a torus $\rT$ with the latter property
(see Section~\ref{sec:OneDim}):

\begin{prop} \label{prop:RealAction}
Let $X \subseteq (\CC^*)^n$ be a closed, irreducible variety. Then the
Zariski-closure $\overline{|X|}$ in $(\RR^*)^n$ of the algebraic amoeba 
is stable under a subtorus of the real algebraic torus $(\RR^*)^n$
of dimension at least $2 \dim_\CC X - \dim_\RR \cA(X)$.
\end{prop}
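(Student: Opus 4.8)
The plan is to produce the subtorus of $(\RR^*)^n$ stabilizing $\overline{|X|}$ by an infinitesimal/tangent-space argument at a smooth point, and to extract its dimension from the fact that $\Log$ collapses many directions. First I would pass from $X$ to the real algebraic set $|X| \subseteq \RR_{>0}^n$ and note that $\cA(X) = \Log(|X|)$, with $\Log$ a diffeomorphism onto $\RR^n$ when restricted to $\RR_{>0}^n$; hence $\dim_\RR \cA(X) = \dim_\RR |X| = \dim_\RR \overline{|X|}$, where the closure is taken in $(\RR^*)^n$ (the semialgebraic set $|X|$ and its Zariski closure have the same dimension). Write $d := \dim_\CC X$ and $a := \dim_\RR \cA(X)$; the goal is a subtorus of dimension at least $2d - a$.

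The heart of the argument is a tangent-space computation. Pick a smooth point $z = (z_1,\dots,z_n) \in X$ that is generic, so that the real-analytic map $X \to |X|$, $z \mapsto (|z_1|,\dots,|z_n|)$, has locally constant rank and its image locally has dimension $a$. I would look at the differential of the composite $X \to \RR_{>0}^n \xrightarrow{\Log} \RR^n$ at $z$; writing $z_j = r_j e^{i\theta_j}$, the map is $(\dots) \mapsto (\log r_1,\dots,\log r_n)$, so in the chart $\log z_j = \log r_j + i\theta_j$ the differential of $\Log$ is just the real-part projection. Consequently the kernel of $d\Log|_{T_z X}$ (inside the real $2d$-dimensional tangent space $T_z X \subseteq \CC^n$) has real dimension $\geq 2d - a$. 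Now, crucially, $T_z X$ is a \emph{complex} subspace, so it is closed under multiplication by $i$; the kernel of the real-part-of-$\log$ map is exactly $T_z X \cap (i\RR^n)$ in the logarithmic chart, i.e.\ the set of tangent vectors pointing in purely imaginary (angular) directions. A vector $v \in T_z X$ with $\Log$-image zero means $iv$ lies in the "real scaling" directions; I would argue that the real span $V$ of $\{\,v \in T_z X : d\Log(v) = 0\,\}$, transported by the exponential coordinates, is the Lie algebra of a subtorus $\rR \subseteq (\RR^*)^n$ under which $\overline{|X|}$ is invariant. The point is that these directions are precisely the directions in which one can move $z$ within $X$ without changing any $|z_j|$ — i.e.\ rescaling the phases — and the \emph{rational} structure needed to integrate $V$ to an algebraic subtorus (rather than just a real subgroup) should come from $T_z X$ being a complex subspace together with choosing $z$ generically, forcing $V$ to be defined over $\QQ$ (an argument in the spirit of Example~\ref{ex:TorusAction} and Lemma~\ref{lm:Torus}).

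In more detail, I would let $W \subseteq \RR^n$ be the subspace spanned by the $\QQ$-points in the Zariski closure (in the Lie-algebra sense) of the group of elements $t \in (\CC^*)^n$ with $|t_j| = 1$ for all $j$ that stabilize $X$ together with the extra "phase" symmetries coming from the kernel above; the subtorus $\rR$ of $(\RR^*)^n$ corresponding to $W \cap \QQ^n$ then stabilizes $|X|$, hence $\overline{|X|}$, by continuity and the fact that stabilizers are Zariski-closed. The dimension count is then: $2d = \dim_\RR T_z X$, the image $d\Log(T_z X)$ has dimension $\leq a$, so $\dim_\RR \ker = \dim_\RR (T_z X \cap \text{angular}) \geq 2d - a$, and this kernel feeds into $W$, giving $\dim \rR \geq 2d - a$ after checking that distinct infinitesimal phase-symmetries give independent torus directions. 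The main obstacle I anticipate is the \textbf{algebraicity/rationality step}: knowing the kernel is big gives a real subgroup of $(\RR^*)^n$ of the right dimension fixing $\overline{|X|}$, but upgrading this to a genuine \emph{algebraic subtorus} (equivalently, that the relevant subspace of $\RR^n$ is spanned by lattice vectors) requires exploiting irreducibility of $X$ and the complex-analytic structure — essentially showing that the group of symmetries of the algebraic set $\overline{|X|}$ inside the real torus $(\RR^*)^n$ is itself an algebraic group, so its identity component is a subtorus, and then bounding its dimension below by the tangent-space count at a generic smooth point. I would handle this by working with the (necessarily algebraic) stabilizer subgroup scheme of $\overline{|X|}$ in $(\RR^*)^n$ and showing its Lie algebra contains $V$.
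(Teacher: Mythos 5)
Your central observation coincides with the paper's key step: since $T_zX$ is a complex subspace, multiplying the ``angular'' kernel of $d_z\Log|_{T_zX}$ (of real dimension at least $2\dim_\CC X-\dim_\RR\cA(X)$ at a generic point, by Sard) by $i$ produces ``radial'' directions that $d\abs$ maps to tangent vectors of $|X|$. However, the two steps you yourself flag as delicate are genuine gaps, and closing them requires ideas absent from the proposal. First, \emph{rationality}: the kernel at a \emph{single} generic point $z$, translated to the identity, is just some subspace of $i\RR^n$, and neither genericity of $z$ nor complex-linearity of $T_zX$ forces it to be spanned by rational vectors. The paper's fix is to work with the entire fiber $Z_q=q^{-1}X\cap(S^1)^n$ of $\abs|_X$ over $q=|z|$ and to sum the translated tangent spaces $p^{-1}T_pZ_q$ over all smooth $p$ in the fiber; Lemma~\ref{lm:RationalSpace} shows this sum is rational, and its proof rests on Borel's theorem that products of translates of an irreducible subvariety of an algebraic group generate an algebraic subgroup, the sum being the Lie algebra of the compact part of the resulting subtorus. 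Note also that the fiber is in general \emph{not} an orbit of the stabilizer of $X$ in $(S^1)^n$ (for a generic hypersurface that stabilizer is trivial while the fibers are positive-dimensional), so the group $W$ you build from ``elements $t$ with $|t_j|=1$ stabilizing $X$'' is the wrong object and can be far too small.

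Second, \emph{globalization}: tangency of $qV$ to $\overline{|X|}$ at the single point $q=|z|$ does not produce a group action. To integrate the distribution $q\mapsto qR$ to an action of the torus with Lie algebra $R$ one needs $qR\subseteq T_q\overline{|X|}$ for \emph{all} $q$, and the (rational) subspace obtained from each fiber may a priori vary with $q$. Your fallback via the stabilizer subgroup of $\overline{|X|}$ in $(\RR^*)^n$ does not resolve this, because showing that $V$ lies in the Lie algebra of that stabilizer is equivalent to exactly the global tangency you have not established. The paper handles this with a countability argument: there are only countably many rational subspaces $R$ of dimension at least $c$, the loci $V_R=\{q\in U: T_qU\supseteq qR\}$ are real-Zariski-closed and cover a dense open $U\subseteq|X|$, and irreducibility of $\overline{|X|}$ (itself needing a short argument via the squaring map) forces one $V_R$ to equal $U$. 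Both ingredients---Lemma~\ref{lm:RationalSpace} and the countability-plus-irreducibility step---are essential and missing from your proposal.
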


In particular, if the amoeba has dimension strictly less than $2 \dim_\CC
X$, then a positive-dimensional real torus acts on $\overline{|X|}$. Using
this positive-dimensional torus, we prove Theorem~\ref{thm:Main} by
induction in Section~\ref{sec:Proofs}. 

Theorem~\ref{thm:Main} implies \cite[Conjecture 4.4]{Nisse18}, which
proposes {\em near torus actions} (Definition~\ref{def:NearTorusAction}
below) as the only cause of dimension drops for the amoeba.

\begin{cor} \label{cor:ConjNearTorus}
For an irreducible, closed subvariety $X \subseteq (\CC^*)$,
we have 
\[ \dim_\RR \cA(X) < \min\{n, 2 \dim_\CC X\} \] 
if and only if 
some nontrivial subtorus $\rS \subseteq (\CC^*)^n$ has a near action on $X$.
\end{cor}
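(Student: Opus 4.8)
The plan is to obtain Corollary~\ref{cor:ConjNearTorus} as a direct translation of Theorem~\ref{thm:Main} into the language of near torus actions. The starting observation is that $\dim_\RR\cA(X)\le\min\{n,2\dim_\CC X\}$ always holds: the bound $n$ is forced by $\cA(X)\subseteq\RR^n$, and the bound $2\dim_\CC X$ is exactly what Theorem~\ref{thm:Main} gives when the tori are taken trivial. So the corollary is precisely a criterion for this inequality to be strict, and the whole content is to recognise the defining inequality of a near action (Definition~\ref{def:NearTorusAction}) as the assertion that one of the values minimised in Theorem~\ref{thm:Main} lies strictly below $\min\{n,2\dim_\CC X\}$.

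For the direction ``dimension drop $\Rightarrow$ near action'', assume $\dim_\RR\cA(X)<\min\{n,2\dim_\CC X\}$ and let $\rS$ be a subtorus attaining the minimum in the concise form of Theorem~\ref{thm:Main}, so $2\dim_\CC\overline{\rS\cdot X}-\dim_\CC\rS=\dim_\RR\cA(X)<\min\{n,2\dim_\CC X\}$. This $\rS$ is nontrivial, since the trivial torus produces the value $2\dim_\CC X$, which is $\ge\min\{n,2\dim_\CC X\}$. The displayed strict inequality is exactly the condition defining a near action of $\rS$ on $X$. Conversely, given a nontrivial subtorus $\rS$ with a near action on $X$, the data witnessing it (a pair $\rT\subseteq\rS$ with $\rS\cdot\overline{\rT\cdot X}=\overline{\rT\cdot X}$, or $\rS$ itself in the concise phrasing) is admissible in the minimum of Theorem~\ref{thm:Main}; hence $\dim_\RR\cA(X)$ is at most the associated value, which the definition of a near action requires to be $<\min\{n,2\dim_\CC X\}$. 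These two steps give both implications.

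The only point that is not completely formal is setting up the dictionary between Definition~\ref{def:NearTorusAction} and the index set of Theorem~\ref{thm:Main}, and I expect that to be the main (if still modest) obstacle. If the near-action condition is phrased through a pair $\rT\subseteq\rS$, it matches the first formula of Theorem~\ref{thm:Main} on the nose and one only checks that such an $\rS$ is automatically nontrivial. If instead it is phrased through $\overline{\rS\cdot X}$ or through orbit dimensions, I would invoke the equivalence of the two forms of Theorem~\ref{thm:Main}: given a subtorus $\rS$, build $\rT\subseteq\rS$ by successively adjoining generic one-parameter subgroups, each enlarging $\overline{\rT\cdot X}$ by exactly one dimension and stopping once $\overline{\rT\cdot X}=\overline{\rS\cdot X}$ (this must occur, for otherwise every one-parameter subgroup of $\rS$, and hence $\rS$ itself, stabilises $\overline{\rT\cdot X}$, forcing $\overline{\rS\cdot X}=\overline{\rT\cdot X}$); for this $\rT$ one has $\dim_\CC\overline{\rT\cdot X}=\dim_\CC X+\dim_\CC\rT$ with $\overline{\rT\cdot X}=\overline{\rS\cdot X}$ being $\rS$-stable, which is the identification of values needed. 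Beyond this bookkeeping there is no further difficulty; in particular the regimes $2\dim_\CC X\le n$ (where the corollary specialises to the strict-inequality discussion preceding it) and $2\dim_\CC X>n$ (where the active bound is $n$) are treated uniformly by the argument above.
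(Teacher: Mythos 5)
Your proposal is correct and follows essentially the same route as the paper: both directions are read off from the second (concise) formula of Theorem~\ref{thm:Main}, using the identity $\dim_\CC\rS+2\dim_\CC Y=2\dim_\CC\overline{\rS\cdot X}-\dim_\CC\rS$ to match the near-action inequalities with the minimised quantity, and noting that a minimising $\rS$ must be nontrivial because the trivial torus yields the value $2\dim_\CC X$. The "dictionary" issue you flag is exactly the content of the paper's one-line computation $\dim_\CC Y=\dim_\CC\overline{\rS\cdot X}-\dim_\CC\rS$, so no further work is needed.
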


We conclude this introduction
with a relation to the tropical variety of $X$, also to be proved in
Section~\ref{sec:Proofs}.

\begin{cor} \label{cor:Tropical}
For any irreducible, closed subvariety $X \subseteq (\CC^*)$
the dimension $\dim_\RR \cA(X)$ is determined by the 
tropical variety $\Trop(X) \subseteq \RR^n$ of $X$ via 
\begin{align*} 
\dim_\RR &\cA(X)=
\min \{\ 2 \dim_\RR \Trop(X) + 2 \dim_\RR T - \dim_\RR S \mid
\\
& T \subseteq S \subseteq \RR^n \text{ rational linear subspaces with }
S+(T+\Trop(X))=T+\Trop(X)\ \},
\end{align*}
where a subspace of $\RR^n$ is called {\em rational} if it is spanned by vectors in $\QQ^n$. Similar to Theorem~\ref{thm:Main}, we have the equivalent formula 
\begin{align*} 
\dim_\RR \cA(X)=
&\min \{\ 2 \dim_\RR (S+\Trop(X)) - \dim_\RR S \mid S \subseteq \RR^n \text{ rational linear subspace}\}. 
\end{align*}
\end{cor}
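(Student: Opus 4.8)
The plan is to transport the concise formula of Theorem~\ref{thm:Main} through tropicalization, using three standard facts. \textbf{(i)} For every closed subvariety $Z\subseteq(\CC^*)^n$ one has $\dim_\RR\Trop(Z)=\dim_\CC Z$ (the Bieri--Groves theorem). \textbf{(ii)} The map $\rS\mapsto\Trop(\rS)$ is an inclusion-preserving bijection from subtori of $(\CC^*)^n$ onto rational linear subspaces of $\RR^n$, satisfying $\dim_\RR\Trop(\rS)=\dim_\CC\rS$; indeed $\Trop(\rS)$ is the $\RR$-span of the cocharacter lattice of $\rS$, and a rational subspace $S$ corresponds to the subtorus with cocharacter lattice $S\cap\ZZ^n$. \textbf{(iii)} Tropicalization sends torus multiplication to Minkowski sum: writing $\overline{\rS\cdot X}$ as the Zariski closure of the image of $\rS\times X$ under the multiplication homomorphism $(\CC^*)^n\times(\CC^*)^n\to(\CC^*)^n$, and using that tropicalization is compatible with products and with homomorphisms of tori (here realized by the linear map $(u,v)\mapsto u+v$), we obtain $\Trop(\overline{\rS\cdot X})=\Trop(\rS)+\Trop(X)$; this set is a finite union of polyhedra, hence closed.

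Granting these, for every subtorus $\rS$ with $S:=\Trop(\rS)$ we get
\[
2\dim_\CC\overline{\rS\cdot X}-\dim_\CC\rS
= 2\dim_\RR\bigl(\Trop(\rS)+\Trop(X)\bigr)-\dim_\RR\Trop(\rS)
= 2\dim_\RR(S+\Trop(X))-\dim_\RR S,
\]
using (i) on $\overline{\rS\cdot X}$, then (iii), then (ii). As $\rS$ runs over all subtori, $S$ runs over all rational linear subspaces, so taking minima and invoking the concise form of Theorem~\ref{thm:Main} gives the concise tropical formula.

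For the longer tropical formula I would first translate the stability constraint. Fix $\rT\subseteq\rS$, put $T:=\Trop(\rT)$, $S:=\Trop(\rS)$, $Y:=\overline{\rT\cdot X}$; by (iii), $\Trop(Y)=T+\Trop(X)$ and $\Trop(\overline{\rS\cdot Y})=S+(T+\Trop(X))$. I claim $Y$ is $\rS$-stable iff $S+(T+\Trop(X))=T+\Trop(X)$. ``Only if'' is clear, since then $\overline{\rS\cdot Y}=Y$. For ``if'', the hypothesis and (i) give $\dim_\CC\overline{\rS\cdot Y}=\dim_\RR\Trop(\overline{\rS\cdot Y})=\dim_\RR\Trop(Y)=\dim_\CC Y$; since $Y$ is irreducible (being the closure of the image of the irreducible variety $\rT\times X$) and is contained in the irreducible variety $\overline{\rS\cdot Y}$ of the same dimension, $Y=\overline{\rS\cdot Y}$, i.e.\ $Y$ is $\rS$-stable. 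Now the dictionary (i)--(iii) turns the objective $2\dim_\CC X+2\dim_\CC\rT-\dim_\CC\rS$ into $2\dim_\RR\Trop(X)+2\dim_\RR T-\dim_\RR S$ and the constraint $\rS\cdot(\overline{\rT\cdot X})=\overline{\rT\cdot X}$ into $S+(T+\Trop(X))=T+\Trop(X)$, so the two minimization problems coincide term by term and the longer tropical formula follows from the longer formula of Theorem~\ref{thm:Main}. (Alternatively, one can deduce the longer tropical formula from the concise one by the same elementary convex-geometric argument the paper uses to pass between its two algebraic formulas.)

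The bookkeeping with (i) and (ii) is routine; the substantive point is (iii), the precise assertion $\Trop(\overline{\rS\cdot X})=\Trop(\rS)+\Trop(X)$. This rests on the fundamental theorem of tropical geometry --- compatibility of tropicalization with products and with monomial maps between tori --- and requires mild care because $\Trop(X)$ is a polyhedral complex rather than a variety; in particular one must note that the Minkowski sum $\Trop(\rS)+\Trop(X)$ is already closed (as $\Trop(\rS)$ is a linear subspace and $\Trop(X)$ a finite union of polyhedra). With (iii) and the Bieri--Groves equality in hand, both displayed formulas of Corollary~\ref{cor:Tropical} fall out of Theorem~\ref{thm:Main}.
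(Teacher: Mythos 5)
Your proposal is correct and follows essentially the same route as the paper: the key lemma (that $\rS$-stability of $\overline{\rT\cdot X}$ translates to $S+(T+\Trop(X))=T+\Trop(X)$, proved via $\Trop(\overline{\rS\cdot Y})=S+\Trop(Y)$, Bieri--Groves, and irreducibility) is exactly the paper's Lemma~\ref{lem:LinealityTorusAction}, and the rest is the same dictionary between subtori and rational subspaces. The only cosmetic difference is that you obtain the concise tropical formula by transporting the concise formula of Theorem~\ref{thm:Main} directly, whereas the paper derives it from the longer tropical formula by repeating the argument at the end of the proof of Theorem~\ref{thm:Main}; both are equally valid.
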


\subsection*{Acknowledgments}
JD and CHY were partially and fully, respectively, supported by NWO
Vici grant entitled {\em Stabilisation in Algebra and Geometry}. JD
thanks Mounir Nisse and Frank Sottile for sharing their work on amoebas
at the Mittag-Leffler semester on {\em Tropical Geometry, Amoebas, and
Polyhedra}. All authors thank the Institut Mittag-Leffler for inspiring
working conditions. JR thanks J\"urgen Hausen for helpful discussions.

\section{In search of a positive-dimensional torus}
\label{sec:OneDim}

Throughout this section we fix an irreducible, closed subvariety $X
\subseteq (\CC^*)^n$. If $\dim_\RR \cA(X) < 2\dim_\CC X$, then we will
find a one-dimensional torus $\rT \subseteq (\CC^*)^n$ such that $\rT \cap
(\RR^*)^n$ preserves the Zariski-closure $\overline{|X|}$ and $\dim_\RR
(\cA(X)+\cA(\rT))=\dim_\RR \cA(X)$.

\subsection*{Preliminaries}
We write $S^1 \subseteq \CC^*$ for the unit circle. Recall that this
is a real form of the algebraic group $\CC^*$: indeed, tensoring the
coordinate ring $\RR[c,s]/(c^2+s^2-1)$ of $S^1$ with $\CC$ yields the
coordinate ring $\CC[c,s]/((c+is)(c-is)-1)$, which we recognise as the
coordinate ring of an algebraic torus with standard coordinate $c+is$;
moreover, the the inverse morphism $S^1 \to S^1,(c,s) \mapsto (c,-s)$
complexifies to the inverse morphism $\CC^* \to \CC^*, (c+is) \mapsto
1/(c+is)=(c-is)$; and similarly for the multiplication morphism $S^1
\times S^1 \to S^1$. Both $S^1$ and the other real form of $\CC^*$, the
real-algebraic group $\RR^*$, will play fundamental roles in our proof.

We write $(S^1)^n \subseteq (\CC^*)^n$, where the former is a real form
of the latter algebraic group. For $p \in (\CC^*)^n$ and $Q$ any subset
of $\CC^n$ we write $pQ$ for the for the result of coordinate-wise
multiplication of $p$ with each element of $Q$.  Writing $1$ for the
unit element in $(\CC^*)^n$ and $T_\bullet \bullet$ for
(real or complex) tangent spaces, we have 
\[ T_1 (\CC^*)^n = \CC^n = \RR^n  \oplus_\RR i \RR^n = 
T_1 \RR^n  \oplus_\RR T_1 (S^1)^n. \]
Component-wise multiplication by $p \in (\CC^*)^n$ yields
\[ T_p (\CC^*)^n = p \RR^n \oplus_\RR i p \RR^n 
= T_p p \RR^n \oplus_\RR T_p p (S^1)^n. \]
Note that $p^{-1} T_p p (S^1)^n$ is naturally identified with (the
same) $i \RR^n$ for all $p \in (\CC^*)^n$, and $p^{-1} T_p p \RR^n$
is identified with (the same) $\RR^n$ for all $p$.

Rather than directly working with the amoeba of $X$, we will work
with the algebraic amoeba $|X|$, the image of $X$ under the semi-algebraic map
\[ \abs:(\CC^*)^n \to \RR_{>0}^n,\ (z_1,\ldots,z_n) \mapsto
(|z_1|,\ldots,|z_n|).\]
The reason for this is that $|X|$ is, by real quantifier elimination,
a semi-algebraic set, hence analysable with methods from
real algebraic geometry. The following is immediate.

\begin{lm} \label{lm:dLog}
At $p \in (\CC^*)^n$, the derivative $d_p \Log$
(respectively, $d_p \abs$) sends the real vector
space $T_p p (S^1)^n$ to zero and an element $p v$ with $v \in \RR^n$
to $v$ (respectively, to $|p|v$).  \hfill $\square$
\end{lm}

\subsection*{Subvarieties of real tori}

We prove an auxiliary result on subvarieties of real tori. We will use
the term {\em real-Zariski} to refer to the real Zariski topology
on a real algebraic variety or, more generally, on a semi-algebraic set. We
write $\Zns$ for the nonsingular locus of a real algebraic variety.

\begin{lm} \label{lm:RationalSpace}
Let $Z$ be a real-Zariski-closed subset of $(S^1)^n \subseteq (\CC^*)^n$. Then
the real subspace $\sum_{p \in \Zns} p^{-1} T_p Z \subseteq i \RR^n$
is spanned by its intersection with $i \QQ^n$.
\end{lm}

\begin{proof}
That subspace is additive under union of irreducible components, so we
may assume that $Z$ is irreducible. Let $Z_\CC \subseteq (\CC^*)^n$
be the complexification of $Z$, an irreducible algebraic variety.
After multiplying with $p^{-1}$ for any fixed $p \in Z_\CC$ we may
assume that $1 \in Z_\CC$. By \cite[Proposition 2.2]{Borel91} there
exist a natural number $m$ and exponents $e_1,\ldots,e_m \in \{\pm 1\}$
such that the image $\rT$ of the multiplication map
\[ \mu_Z: Z_\CC^m \to (\CC^*)^n, \quad 
\bz=(z_1,\ldots,z_m) \mapsto z_1^{e_1} \cdots z_m^{e_m} \]
is a closed, connected algebraic subgroup $\rT$ of $(\CC^*)^n$, i.e., a
sub-torus. Since $\Zns$ is Zariski-dense in $Z_\CC$, there
exists a 
point $\bz=(z_1,\ldots,z_m) \in \Zns^m$ (no
complexification!) such that 
the complex-linear map $d_\bz \mu_Z: T_\bz Z_\CC^m
\to T_{\mu(\bz)} \rT$ is surjective. Now $\mu_Z$ is the
restriction to $Z_\CC^m$ of the multiplication map $\mu: ((\CC^*)^n)^m \to (\CC^*)^n$
with the same definition. We have 
\[ \mu=L_{\mu(\bz)} \circ \mu \circ (L_{z_1^{-1}} \times \cdots
\times L_{z_m^{-1}}), \] 
where $L_x$ is left multiplication with $x \in (\CC^*)^n$, and
accordingly, 
\[ d_\bz \mu = d_1 L_{\mu(\bz)} \circ d_{(1,\ldots,1)} \mu \circ 
(d_{z_1} L_{z_1^{-1}} \times \cdots \times d_{z_m}
L_{z_m^{-1}}). \]
Using that the derivative of multiplication is addition and
the derivative of inverse is negation, we find 
\[ d_{\mu(\bz)} L_{\mu(\bz)^{-1}} \circ d_\bz \mu_Z: T_\bz Z^m_\CC \to
T_1 \rT,\ (v_1,\ldots,v_m) \mapsto e_1z_1^{-1} v_1 + \cdots + e_m z_m^{-1}
v_m; \]
and by the choice of $\bz$ this map is surjective. For each $j$ we have $T_{z_j} Z_\CC=T_{z_j} Z \oplus_\RR i
T_{z_j} Z$ and the complex-linear map $d_{\mu(\bz)}
L_{\mu(\bz)^{-1}} \circ d_\bz \mu_Z$
sends the real direct sum $\bigoplus_j T_{z_j} Z$ surjectively onto $T_1 (\rT \cap
(S^1)^n)=(T_1 \rT) \cap (i\RR)^n=:Q$. Since $\rT$ is an
algebraic torus, $Q$ is spanned by its intersection with
$i\QQ^n$, and the space in the lemma contains $Q$.
Moreover, for all $z \in Z$ we have $z^{-1} T_z Z \subseteq
Q$, so that
the space in the lemma is in fact equal to $Q$.
\end{proof}

\subsection*{A real torus action}

We return to our irreducible variety $X \subseteq (\CC^*)^n$. By standard
results in real algebraic geometry, $X$ is also irreducible when regarded
as a real-algebraic variety of dimension $2 \dim_\CC X$. 
Then the semialgebraic set $|X|$ is irreducible in the sense that
its (real) Zariski closure in $\RR^n$ is irreducible. 
To see that, first note that the square
\[ |X|^2:=\{(|z_1|^2,\ldots,|z_n|^2) \mid (z_1,\ldots,z_n) \in X\} \subseteq
\RR_{>0}^n \]
is irreducible, since it is the image of $X$ under an algebraic morphism. 
Now, since the map $(x_1, \dots, x_n) \mapsto (x_1^2, \dots, x_n^2)$ on
$(\RR^*)^n$ is a finite flat morphism, there exists exactly one
irreducible component of the preimage of the Zariski closure $\overline{|X|^2}$
which intersects the positive orthant. Hence $|X|$ is irreducible. 

\begin{proof}[Proof of Proposition~\ref{prop:RealAction}]
For $q \in |X|$, write $Z_q:=q^{-1} X \cap (S^1)^n$, which is a real Zariski-closed
subset of $(S^1)^n$ such that $qZ_q=\abs^{-1}(q) \cap X$ is the fiber of 
$\abs|_X$ over $q$. By Sard's theorem, there is an open subset $U$ of $|X|$, 
dense in $|X|$ in the real Zariski-topology, such that $Z_q$ has dimension equal to the expected dimension 
$c:=2\dim_\CC X - \dim_\RR |X|=2\dim_\CC X - \dim_\RR \cA(X)$.
For each $q \in U$, define 
\[ Q_q:=\sum_{p \in (Z_q)_{\mathrm{ns}}} p^{-1} T_p Z_q
\subseteq i \RR^n, \]
which is a real vector space of dimension at least $c$, spanned by $Q_p
\cap i\QQ$ by Lemma~\ref{lm:RationalSpace}. 

Fix $q \in U$. For each $p \in Z_q$ we have $qp \in X$ and $qp(p^{-1}
T_p Z_q) \subseteq T_{qp} X$ and hence, since $X$ is a complex algebraic
variety, also $(qp) (i p^{-1} T_p Z_q) \subseteq T_{qp} X$. The space on
the left is contained in $qp \RR^n$, and hence, by Lemma~\ref{lm:dLog},
$d_{qp} \abs$ maps it onto $|qp| (i p^{-1} T_p Z_q) = q (i p^{-1}
T_p Z_q)$. We conclude that the latter space is contained in $T_q U$
for each $p \in Z_q$. Therefore,
\[ T_q U \supseteq q \sum_{p \in (Z_q)_{\mathrm{ns}}} i
p^{-1} T_p Z_q = q (i Q_q); \]
here $iQ_q \subseteq \RR^n$ is spanned by its intersection
with $\QQ^n$. 
Now for each vector space $R \subseteq \RR^n$ of dimension at least $c$
and spanned by its intersection with $\QQ^n$, the set
\[ V_R:=\{q \in U \mid T_q U \supseteq q R\} \]
is a real-Zariski-closed subset of $U$. There are only countably
many such $R$, and the above discussion shows that the closed sets
$V_R$ cover the semialgebraic set $U$. 

But then one of them must have
dimension equal to that of $U$, and in fact, since the Zariski closure of
$U$ is irreducible,
be equal to $U$. We conclude that there exists a real vector space
$R \subseteq \RR^n$, of dimension at least $c$ and spanned by $R \cap
\QQ^n$, such that $qR \subseteq T_q U$ for all $q \in U$. But then $q R
\subseteq T_q \overline{|X|}$ for all $q$ in the real algebraic variety
$\overline{|X|}$. Since $R$ is spanned by its intersection
with $\QQ^n$, there exists a real-algebraic torus $\rR_\RR \subseteq (\RR^*)^n$ 
with Lie algebra $R$. The sub-bundle of the tangent bundle of
$(\RR^*)^n$ that arises by differentiating the action of $\rR_\RR$ on $(\RR^*)^n$
is tangent to $\overline{|X|}$.  This implies
that $\overline{|X|}$ is $\rR_\RR$-stable.
\end{proof}

\section{Proofs of the main results} \label{sec:Proofs}

We begin with a lemma that was already used in the introduction
(Example~\ref{ex:Hyper2}).

\begin{lm} \label{lm:Torus}
Let $X \subseteq (\CC^*)^n$ be a closed, irreducible subvariety and
$\rS \subseteq (\CC^*)^n$ a subtorus. Then there exists a subtorus $\rT
\subseteq \rS$ with $\dim_\CC \rT=\dim_\CC \overline{\rS \cdot X}-\dim_\CC X$
such that $\overline{\rT \cdot X}=\overline{\rS \cdot X}$.
\end{lm}

\begin{proof}
  We prove the statement by induction on $k = \dim_\CC \overline{\rS \cdot X}-\dim_\CC X$.
	If $k=0$, then $\overline{\rS \cdot X} = X$ and $\rT = \{1\}$ will do.
	If $k>0$, choose a one-dimensional subtorus $\rR \subset \rS$ such that
	$\dim_\CC \overline{\rR \cdot X} > \dim_\CC X$. Such $\rR$ exists since otherwise
	$X$ would be invariant under all such $\rR$ and hence under $\rS$. 
	Then the statement follows from the induction assumption applied to 
	$X' = \overline{\rR \cdot X}$
	and a torus $\rS'$ such that $\rS = \rR \times \rS'$.
\end{proof}

We now use Proposition~\ref{prop:RealAction} to establish our dimension
formula for the (ordinary or algebraic) amoeba.

\begin{proof}[Proof of Theorem~\ref{thm:Main}]
Let $X \subseteq (\CC^*)^n$ be Zariski-closed and irreducible. Since we
have already proved the inequality $\leq$ of the theorem, it suffices to
establish the existence of subtori $\rT \subseteq \rS$ of $(\CC^*)^n$
such that $\overline{\rT \cdot X}$ is $\rS$-stable and $\dim_{\RR}
\cA(X)=2 \dim_{\CC} X + 2 \dim_{\CC} \rT - \dim_{\CC} \rS$. We proceed
by induction on $n$. For $n=0$ we have $X=(\CC^*)^0$ and we can take
$\rS=\rT=\{1\}$. So we assume that $n>0$ and that the statement holds
for subvarieties of tori of dimension $n-1$.

If $\dim_\RR \cA(X)=2 \dim_{\CC} X$, then we may take $\rT=\rS=\{1\}$ and
we are done. So we may assume that $\dim_\RR \cA(X)<2 \dim_{\CC} X$. Then,
by Proposition~\ref{prop:RealAction}, there exists a one-dimensional,
real-algebraic torus $\rR_\RR \subseteq (\RR^*)^n$ which stabilises
the Zariski-closure $\overline{|X|}$ of the algebraic amoeba. Let $\rR
\subseteq (\CC^*)^n$ be the complexification of $\rR_\RR$.  Then we
find an open subset $U \subseteq \cA(X)$ whose complement has positive
codimension such that $U$ is a smooth manifold with $\cA(\rR) \subseteq
T_u U$ for each $u \in U$ (use Lemma~\ref{lm:dLog} for the tangent
vectors coming from the action of $\rR_\RR$). We find that the fibres of
the map $U \to \RR^n/\cA(\rR)$ have dimension $1$, and this implies that
\[ 
\dim_\RR \overline{\cA(R)+\cA(X)}/\cA(R) = -1 + \dim_\RR
\cA(X).
\]
(We note that we are working with the closure with respect to the Euclidean
topology of $\RR^n$ in the above formula.)

Define 
\[ \tilde{X}:=\overline{\rR \cdot X}/\rR \subseteq (\CC^*)^n
/ \rR \cong (\CC^*)^{n-1}. \]
Then the previous equation implies that the amoeba
\[ \cA(\tilde{X})=\overline{\cA(R)+\cA(X)}/\cA(R) \]
has real dimension equal to $-1+\dim_\RR \cA(X)$.

By the induction hypothesis, there exist subtori $\tilde{\rT} \subseteq
\tilde{\rS}$ of $(\CC^*)^n/\rR$ such that $\overline{\tilde{\rT} \cdot
\tilde{X}}$ is $\tilde{\rS}$-stable and
\[ \dim_\RR \cA(\tilde{X}) = 2 \dim_{\CC} \tilde{X} + 2
\dim_\CC \tilde{\rT} - \dim_\CC \tilde{\rS}. \]
We distinguish two cases. First, assume that $X$ is not stable
under $\rR$, so that $\dim_\CC \tilde{X}=\dim_\CC X$. Then let $\rT,\rS$
be the pre-images in $(\CC^*)^n$ of
$\tilde{\rT},\tilde{\rS} \subseteq (\CC^*)^n/\rR$, respectively. Then
$\rT \subseteq \rS$ are subtori such that $\overline{\rT \cdot X}$
is $\rS$-stable, and we find
\begin{align*} 
\dim_\RR \cA(X) &=
1 + \dim_\RR \cA(\tilde{X}) \\
&= 1 + 2\dim_\CC \tilde{X} + 2\dim_\CC \tilde{\rT} - \dim_\CC \tilde{\rS} \\
&= 1 + 2\dim_\CC X + 2(-1 + \dim_\CC \rT) - (-1 + \dim_\CC \rS) \\
&= 2 \dim_\CC X + 2 \dim_\CC \rT - \dim_\CC \rS.
\end{align*}
Second, assume that $X$ is stable under $\rR$.  As before, let $\rS$
be the pre-image of $\tilde{\rS}$
in $(\CC^*)^n$, but now let $\rT$ be any torus in $(\CC^*)^n$
of complex dimension {\em equal} to $\dim_\CC \tilde{\rT}$ that
projects surjectively onto $\rT$. Using that $X$ is $\rR$-stable and
$\overline{\tilde{\rT} \cdot \tilde{X}}$ is $\tilde{\rS}$-stable,
we find that $\overline{\rT \cdot X}$ is $\rS$-stable. Furthermore, 
\begin{align*}
\dim_\RR \cA(X) &= 
1 + \dim_\RR \cA(\tilde{X}) \\
&= 1 + 2\dim_\CC \tilde{X} + 2\dim_\CC \tilde{\rT} - \dim_\CC \tilde{\rS} \\
&= 1 + 2(-1+\dim_\CC X) + 2 \dim_\CC \rT - (-1 + \dim_\CC \rS) \\
&= 2 \dim_\CC X + 2 \dim_\CC \rT - \dim_\CC \rS,
\end{align*}
as desired.

For the second formula, let $\rT,\rS$ be subtori as in the first
formula. We then have
\[ 
2 \dim_\CC \overline{\rS \cdot X} - \dim_\CC \rS
= 2 \dim_\CC \overline{\rT \cdot X} - \dim_\CC \rS 
\leq 2 \dim_\CC X + 2 \dim_\CC \rT - \dim_\CC \rS,\]
so the second formula is a lower bound to the first formula. Conversely,
if $\rS$ is any subtorus, then by Lemma~\ref{lm:Torus} there exists a
subtorus $\rT \subseteq \rS$ such that $\overline{\rT \cdot X}=\overline{\rS \cdot
X}$ and $\dim_\CC \rT=\dim_\CC \overline{\rS \cdot X}-\dim_\CC X$, and we find 
\[ 
2 \dim_\CC X + 2 \dim_\CC T - \dim_\CC \rS
= 2 \dim_\CC \overline{\rS \cdot X} - \dim_\CC \rS,
\]
hence the first formula is a lower bound to the second formula.
\end{proof}

\begin{ex}
We give an alternative proof of \cite[Theorem 4.5]{Nisse18}, which says that if
$\dim_\RR \cA(X)=\dim_\CC X$, then $X$ is a single orbit under a subtorus
of $(\CC^*)^n$.  Take a subtorus $\rS\subseteq (\CC^*)^n$ that achieves
the minimum in the second formula of Theorem~\ref{thm:Main}. Since we
always have $\dim_\CC \overline{\rS \cdot X}\geq \dim_\CC \rS, \dim_\CC
X$, from our choice of $\rS$ we have
\[ 
\dim_\CC X = 2 \dim_\CC \overline{\rS \cdot X} - \dim_\CC \rS \geq
\dim_\CC X.\] 
Hence $\dim_\CC \rS = \dim_\CC X = \dim_\CC \overline{\rS \cdot X}$. But $\overline{\rS \cdot X}$ is irreducible 
and contains both $X$ and an orbit of $\rS$, so $X$ must be equal to
such an orbit.  \hfill $\clubsuit$
\end{ex}

\subsection*{Near Torus Actions}

We start by reviewing Nisse-Sottile's notion of near torus actions
\cite[Definition 4.1]{Nisse18}.

\begin{de} \label{def:NearTorusAction}
  Let $X \subseteq (\CC^*)$ be an irreducible closed subvariety
	and $\rS \subseteq (\CC^*)^n$. We set $Y := (\rS \cdot X)/\rS$. 
	Then $\rS$ \emph{has a near action on $X$} if 
	\begin{align*} 
		2 \dim_\CC X &> \dim_\CC \rS + 2 \dim_\CC Y &\text{ and } & & n &> \dim_\CC \rS + 2 \dim_\CC Y.
	\end{align*}
\end{de}

We now show that, as conjectured in \cite{Nisse18},
unexpected amoeba dimension is equivalent to a near torus
action. The implication $\Leftarrow$ is \cite[Theorem 4.3]{Nisse18}.

\begin{proof}[Proof of Corollary~\ref{cor:ConjNearTorus}]
For any subtorus $\rS \subseteq (\CC^*)^n$, setting $Y := \overline{\rS \cdot X}/ \rS$
we have $\dim_\CC(Y) = \dim_\CC(\rS \cdot X) - \dim_\CC(\rS)$.
Note that $2 \dim_\CC(\rS \cdot X) - \dim_\CC(\rS) < 2 \dim_\CC(X)$ and
$2 \dim_\CC(\rS \cdot X) - \dim_\CC(\rS) < n$ imply $\rS \neq \{1\}$ and
$\rS \neq (\CC^*)^n$, respectively.
Hence the statement follows directly 
from the second formula of Theorem~\ref{thm:Main}.
In particular, in case of a dimension drop, a torus $\rS$
providing the minimum in this formula has a near action on $X$.
\end{proof}

\subsection*{Proof of Corollary~\ref{cor:Tropical}}

We start by presenting a well-known fact in tropical geometry.

\begin{lm} \label{lem:LinealityTorusAction}
  Let $X \subseteq (\CC^*)$ be an irreducible (in particular, reduced) closed subvariety
	and denote by $\Trop(X) \subseteq \RR^n$ its tropicalisation. Let $\rS \subset (\CC^*)^n$ 
	be a subtorus and $S = \Trop(\rS) = \cA(\rS) \subset \RR^n$ the associated (rational) linear subspace.
	Then $\rS \cdot X =X$ if and only if $S + \Trop(X) = \Trop(X)$.  
\end{lm}

\begin{proof}
By basic tropical geometry, $\Trop(\overline{\rS \cdot X}) = S + \Trop(X)$.
	Hence $\rS \cdot X =X$ implies $S + \Trop(X) = \Trop(X)$. 
	Let us assume $S + \Trop(X) = \Trop(X)$ now. Note that
	for irreducible varieties $Y \subseteq (\CC^*)^n$, we have
	$\dim_\CC Y = \dim_\RR \Trop(Y)$, see \cite[Theorem A]{Bieri84}.
	Since both $X$ and $\overline{\rS \cdot X}$ are irreducible, 
	it follows that $\dim_\CC \overline{\rS \cdot X} = \dim_\CC X$.
	Since $X \subseteq \overline{\rS \cdot X}$, this implies
	$X = \overline{\rS \cdot X} = \rS \cdot X$. 
\end{proof}

\begin{proof}[Proof of Corollary~\ref{cor:Tropical}]
By Lemma~\ref{lem:LinealityTorusAction}, the pairs
of subtori $\rT \subseteq \rS \subseteq (\CC^*)^n$
such that $\rS \cdot (\overline{\rT \cdot X}) = \overline{\rT \cdot X}$ are in bijection
to the pairs of rational linear subspaces $T \subseteq S \subseteq \RR^n$ 
such that $S+(T+\Trop(X))=T+\Trop(X)$, via $T= \Trop(\rT)$, $S= \Trop(\rS)$.
Using the relation $\dim_\CC Y = \dim_\RR \Trop(Y)$ again,
we have
\begin{align*} 
	2\dim_\CC X + 2 \dim_\CC \rT -
	\dim_\CC \rS 
	= 2 \dim_\RR \Trop(X) + 2 \dim_\RR T - \dim_\RR S.
\end{align*}
Hence the two minima agree.
The second formula follows similarly as in the proof of
Theorem~\ref{thm:Main}.
\end{proof}

We conclude this paper with a question on computability.

\begin{que}
Does there exist an algorithm that, on input a balanced, pure-dimensional,
rational polyhedral complex $\Sigma \subseteq \RR^n$ which is connected
in codimension $1$, computes the expression
\[ \min \{2\dim_\RR (S+\Sigma)-\dim_\RR S \mid S \subseteq \RR^n \text{ rational
subspace} \} \]
from Corollary~\ref{cor:Tropical}?
\end{que}

The first term is the maximum, over all maximal cones $C$ of $\Sigma$,
of $\dim_\RR \Sigma + \dim_\RR S - \dim_\RR (\langle C \rangle_\RR \cap S)$, and
hence it is minimised by an $S$ have certain incidences with given
linear subspaces of $\RR^n$. If the rationality assumption is dropped,
then real quantifier elimination answers the question in the affirmative.
However, similar incidence problems often have real but no rational
solutions. For instance, a classical result in enumerative geometry
says that the number of two-dimensional subspaces in $\RR^4$ (lines in
projective three-space) that nontrivially intersect $4$ given two-dimensional subspaces in general
position is either zero (in which case there are two complex conjugate
solutions) or two. In the latter case, even if the four given spaces
are rational, the two solutions will typically not be. 
We do not know
whether the existence of rational solutions for such incidence problems
is decidable in general, nor whether the additional conditions on $\Sigma$
force that real solutions imply rational solutions.
On the other hand, if $X$ is a variety given by equations with
coefficients in, say, some number field, then of course, by real
quantifier elimination, there does exist an algorithm for computing
$\dim_\RR \cA(X)=\dim_\RR |X|$.

\bibliographystyle{alpha}
\bibliography{diffeq}

\end{document}